\documentclass[12pt,reqno]{amsart}

\usepackage[T1]{fontenc}
\usepackage[utf8]{inputenc}
\usepackage[english]{babel}
\usepackage{amsthm,amssymb,amsmath}

\usepackage[top=1in, bottom=1in, left=1in, right=1in]{geometry}
\usepackage[colorlinks=true,linkcolor=blue,citecolor=blue,urlcolor=blue]{hyperref}

\newtheorem{theorem}{Theorem}[section]
\newtheorem{proposition}[theorem]{Proposition}
\newtheorem{corollary}[theorem]{Corollary}
\newtheorem{lemma}[theorem]{Lemma}

\theoremstyle{definition}
\newtheorem{definition}[theorem]{Definition}
\newtheorem{example}{Example}

\DeclareMathOperator{\Aut}{Aut}
\DeclareMathOperator{\Br}{Br}
\DeclareMathOperator{\cl}{cl}
\DeclareMathOperator{\IB}{IB}
\newcommand{\F}{\mathbb F}
\newcommand{\id}{\mathrm{id}}

\title[Inverse Baer deformations]{Inverse Baer deformations and finite simple skew braces of nilpotent type}

\author[Marco Damele]{Marco Damele}
\address{Department of Mathematics and Computer Science, University of Cagliari, Via Ospedale 72, 09124 Cagliari, Italy}
\email{marco.damele@unica.it}
\date{}

\subjclass[2020]{Primary 16T25; Secondary 20D15, 20E22}
\keywords{skew brace, simple skew brace, Baer transform, nilpotent group, extraspecial group, asymmetric product}

\begin{document}

\begingroup
\setlength{\abovedisplayskip}{1pt plus .5pt minus .5pt}
\setlength{\belowdisplayskip}{1pt plus .5pt minus .5pt}
\setlength{\abovedisplayshortskip}{0pt plus .5pt minus .5pt}
\setlength{\belowdisplayshortskip}{0pt plus .5pt minus .5pt}
\setlength{\jot}{0pt}
\setlength{\parskip}{0pt}

\begin{abstract}
Finite simple skew braces with non-abelian additive group remain largely unexplored. In particular, to the best of our knowledge, no finite simple skew brace with non-abelian nilpotent additive group had previously been constructed. We introduce an inverse Baer deformation which produces skew braces of additive nilpotency class at most two from braces of abelian type. Starting from a finite left brace $C=(A,\oplus,\circ)$ of odd order and a suitable biadditive alternating map $\kappa:A\times A\to A$, we define
\[
x+y=x\oplus y\oplus\frac12\kappa(x,y).
\]
The resulting skew brace $\IB_\kappa(C)$ satisfies $\Br(\IB_\kappa(C))=C$ and $[x,y]_+=\kappa(x,y)$, and simplicity passes from $C$ to $\IB_\kappa(C)$. Applying this construction to simple left braces arising from asymmetric products, for every pair of odd primes $p,q$ with $p\mid(q-1)$ and every $m\ge1$ we construct a finite simple skew brace $X_{p,q}^{(m)}$ of order $p^{2m(q-1)+1}q$
such that
\[
(X_{p,q}^{(m)},+)\cong E_{p,q}^{(m)}\times C_q,
\]
where $E_{p,q}^{(m)}$ is an extraspecial $p$-group of order $p^{2m(q-1)+1}$ and exponent $p$. Thus every fixed admissible pair $(p,q)$ yields infinitely many pairwise non-isomorphic finite simple skew braces with non-abelian nilpotent additive group of class two.
\end{abstract}

\maketitle

\section{Introduction}

Finite simple skew braces with non-abelian additive group are still very poorly understood. As Vendramin recently emphasized, ``almost nothing about simple skew braces is known when the additive group is not abelian''~\cite{Vendramin24}. This is in marked contrast with the abelian-type case, where several constructions of finite simple braces are available. Bachiller constructed the first non-trivial finite simple left braces~\cite{Bachiller18}, and large families were subsequently obtained through asymmetric products by Bachiller, Ced\'o, Jespers and Okni\'nski~\cite{BCJO19}. Outside the abelian-type setting, explicit examples are much scarcer. The smallest simple skew braces which are not braces have order $12$; there are two of them, with additive group $A_4$ and multiplicative group $C_3\rtimes C_4$~\cite{KSV21}. More recently, Byott constructed the first infinite family of simple skew braces which are not braces and which do not arise from non-abelian simple groups~\cite{Byott26}. His examples have order $p^pq$, where $q\mid(p^p-1)/(p-1)$, and both their additive and multiplicative groups are solvable.

The contrast becomes particularly sharp when nilpotency is imposed. On the multiplicative side, Damele and Ercan proved that if $B$ is a finite simple skew brace with nilpotent multiplicative group, then $B\cong\operatorname{Triv}(C_p)$ for some prime $p$~\cite{DE26}. Thus a non-trivial finite simple skew brace cannot have nilpotent multiplicative group. On the additive side, finite skew braces of nilpotent type, namely skew braces whose additive group is nilpotent, were systematically studied by Ced\'o, Smoktunowicz and Vendramin~\cite{CSV19}. To the best of our knowledge, prior to the present work no finite simple skew brace with non-abelian nilpotent additive group had been constructed. We show that such skew braces not only exist, but occur in infinite families already for every fixed admissible pair of primes.

The key point is a general construction which reverses the Baer transform for skew braces. The Baer transform of Ercan, G\"ul, G\"ulo\u{g}lu and K{\i}zmaz~\cite{EGGK26} associates with a finite skew brace $X=(X,+,\circ)$ of odd order and additive nilpotency class at most two the left brace $\Br(X)=(X,\oplus,\circ)$, where
\[
x\oplus y=x+y+\frac12[y,x]_+.
\]
Moreover, every ideal of $X$ remains an ideal of $\Br(X)$. This suggests reversing the construction. Starting from a finite left brace $C=(A,\oplus,\circ)$ of odd order, we deform its abelian addition by means of an alternating biadditive map $\kappa:A\times A\to A$.

\noindent\textbf{Theorem A.}
\emph{Let $C=(A,\oplus,\circ)$ be a finite left brace of odd order, and let $\kappa:A\times A\to A$ be biadditive and alternating. Put $K=\kappa(A,A)$ and assume that $\kappa(A,K)=\kappa(K,A)=0$ and
\[
\lambda_a^C(\kappa(x,y))=\kappa(\lambda_a^C(x),\lambda_a^C(y))
\]
for all $a,x,y\in A$. Then
\[
x+y=x\oplus y\oplus\frac12\kappa(x,y)
\]
defines a skew brace $\IB_\kappa(C)=(A,+,\circ)$ such that
\[
\cl(A,+)\leq2,\qquad \Br(\IB_\kappa(C))=C,\qquad [x,y]_+=\kappa(x,y).
\]
If $\kappa\neq0$, then $(A,+)$ is non-abelian of nilpotency class exactly two. Moreover, if $C$ is simple, then $\IB_\kappa(C)$ is simple.}

Thus inverse Baer deformation reduces the construction problem to finding a simple left brace carrying a non-zero alternating map satisfying the centrality and equivariance conditions above. The simplicity statement follows from the ideal-preservation property of the Baer transform: every ideal of $\IB_\kappa(C)$ is an ideal of $\Br(\IB_\kappa(C))=C$.

We then show that these hypotheses arise naturally in the asymmetric-product construction of~\cite{BCJO19}. Let $p$ and $q$ be odd primes with $p\mid(q-1)$, and let $R$ be the $(q-1)$-dimensional orthogonal $\F_p$-module used in \cite[Section~5.3]{BCJO19}, endowed with its invariant non-degenerate symmetric bilinear form $\beta$. For every $m\ge1$ we consider $V_m=R^{2m}.$
The orthogonal direct sum of $2m$ copies of $\beta$ gives the symmetric form required by the asymmetric-product construction and produces a simple left brace $C_{p,q}^{(m)}$. Pairing the copies of $R$ two at a time, we define
\[
\omega_m(u,v)=\sum_{j=1}^{m}
\bigl(\beta(u_{2j-1},v_{2j})-\beta(u_{2j},v_{2j-1})\bigr).
\]
This is a non-degenerate alternating form invariant under the relevant automorphisms and therefore yields a non-zero Baer-admissible map
\[
\kappa_m((u,a,s),(v,b,t))=(0,0,\omega_m(u,v)).
\]

\noindent\textbf{Theorem B.}
\emph{Let $p$ and $q$ be odd primes satisfying $p\mid(q-1)$. For every $m\ge1$ there exists a finite simple skew brace $X_{p,q}^{(m)}$ of order $|X_{p,q}^{(m)}|=p^{2m(q-1)+1}q$
whose additive group is
\[
(X_{p,q}^{(m)},+)\cong E_{p,q}^{(m)}\times C_q,
\]
where $E_{p,q}^{(m)}$ is an extraspecial $p$-group of order $p^{2m(q-1)+1}$ and exponent $p$. In particular, $\cl(X_{p,q}^{(m)},+)=2.$}

Theorem~B gives the first examples known to us of finite simple skew braces of non-abelian nilpotent type. Moreover, the phenomenon is far from sporadic. For every fixed pair of odd primes $p,q$ satisfying $p\mid(q-1)$, varying $m$ gives skew braces
\[
X_{p,q}^{(1)},X_{p,q}^{(2)},X_{p,q}^{(3)},\ldots
\]
of pairwise distinct orders $p^{2m(q-1)+1}q$. Hence every fixed admissible pair $(p,q)$ produces infinitely many pairwise non-isomorphic finite simple skew braces with non-abelian nilpotent additive group of class two.

The paper is organized as follows. Section~2 recalls the Baer transform and its ideal-preservation property. Section~3 introduces inverse Baer deformations and proves Theorem~A. Section~4 recalls the required asymmetric-product construction and constructs the simple left braces $C_{p,q}^{(m)}$. Section~5 constructs the invariant alternating forms $\omega_m$, applies the inverse Baer deformation, and proves Theorem~B and the resulting infinite-family statement.

\endgroup

\section{Preliminaries and the Baer transform}

\subsection{Skew braces}
We follow the terminology and basic notation on skew braces from
\cite{GV17,KSV21}. A \emph{skew brace} is a triple $(B,+,\circ)$ such that $(B,+)$ and $(B,\circ)$ are groups and $a\circ(b+c)=a\circ b-a+a\circ c$
for all $a,b,c\in B$. The identity elements of the two groups coincide. The map $\lambda^{B}:(B,\circ)\longrightarrow \Aut(B,+)$ given by $\lambda^{B}(a)(b)=\lambda^{B}_a(b)=-a+a\circ b,$
is a group homomorphism.
The skew brace is \emph{trivial} if $\lambda^{B}_a=\id$ for every $a\in B$, equivalently if $a\circ b=a+b$ for all $a,b\in B$. A subgroup $I\leq(B,+)$ is an \emph{ideal} if $I\trianglelefteq(B,+),$ $I\trianglelefteq(B,\circ)$ and for all $a \in B$ $\lambda^{B}_{a}(I)=I$.
A non-zero skew brace is \emph{simple} if its only ideals are $0$ and $B$. 

\subsection{The Baer transform}

We shall use the Baer transform of \cite{EGGK26}. Let $X=(X,+,\circ)$ be a
finite skew brace of odd order such that $(X,+)$ has nilpotency class at most
two. We use the commutator convention $ [x,y]_+=x+y-x-y.$
Since $(X,+)$ has odd order, every element has a unique half; we write
$\frac12 z$ for the unique element whose double is $z$. Baer's addition is
\begin{equation}\label{eq:baer-addition}
 x\oplus y=x+y+\frac12[y,x]_+.
\end{equation}
Ercan, G\"ul, G\"ulo\u{g}lu and K{\i}zmaz prove that $(X,\oplus)$ is abelian
and that $ \Br(X)=(X,\oplus,\circ)$
is a left brace \cite[Theorem~1.3]{EGGK26}.
The structural property that will be essential here is the following.

\begin{theorem}[Ercan--G\"ul--G\"ulo\u{g}lu--K{\i}zmaz]\label{thm:baer-ideals}
Let $X=(X,+,\circ)$ be a finite skew brace of odd order such that $(X,+)$ has
nilpotency class at most two. Every ideal of $X$ is an ideal of $\Br(X)$.
\end{theorem}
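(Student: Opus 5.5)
We must show: for $X$ a finite skew brace of odd order with $\cl(X,+)\le 2$ and $I$ an ideal of $X$, the set $I$ is an ideal of $\Br(X)=(X,\oplus,\circ)$. Normality in $(X,\circ)$ and invariance under $\lambda$ do not involve the additive structure. So the plan is to check the three ideal conditions for $\Br(X)$ one at a time and reduce each to a statement about $(X,+)$ versus $(X,\oplus)$.

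\textbf{Step 1: $I$ is a subgroup of $(X,\oplus)$.} Since $(X,\oplus)$ is abelian, normality in $(X,\oplus)$ is automatic once $I$ is a subgroup. For $x,y\in I$ we have $[y,x]_+\in I$, because $I\le (X,+)$. The element $\frac12[y,x]_+$ is the unique $z$ with $2z=[y,x]_+$. Since $I$ has odd order, doubling is a bijection on $I$, so this half lies in $I$. Hence $x\oplus y\in I$, and finiteness makes $I$ closed under $\oplus$-inverses. One could also note $\ominus x=-x$, since $[x,-x]_+=0$.

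\textbf{Step 2: $I$ is normal in $(X,\circ)$.} This is unchanged, since $\circ$ is the same operation in both structures.

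\textbf{Step 3: $\lambda$-invariance.} The $\lambda$-map of $\Br(X)$ is $\lambda^{\Br}_a(b)=\ominus a\oplus (a\circ b)$. Write $c=a\circ b=a+\lambda_a(b)$. Then
\[
\lambda^{\Br}_a(b)=-a+c+\tfrac12[c,-a]_+ .
\]
Class two gives $[c,-a]_+=[a+\lambda_a(b),-a]_+=[\lambda_a(b),-a]_+$, using that commutators are bilinear and $[a,a]_+=0$. This lies in $I$ because $\lambda_a(b)\in I$ and $I$ is normal in $(X,+)$. Therefore $\lambda^{\Br}_a(b)=\lambda_a(b)+\tfrac12[\lambda_a(b),-a]_+\in I$. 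We also have $\lambda^{\Br}_a(I)\subseteq I$, and since $\lambda^{\Br}_a$ is a bijection and $I$ is finite, equality holds.

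\textbf{Main obstacle.} The delicate point is Step 3: the $\Br$-lambda map must be expressed through $+$ and $\lambda$, and the commutator identities in class two must be applied with the sign convention $[x,y]_+=x+y-x-y$ and the half-map handled correctly. The half-map commutes with central elements, and commutators are central, which keeps everything inside $I$.
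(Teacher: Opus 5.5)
Your proof is correct. It takes a different route from the paper only in the sense that the paper gives no argument here: it simply invokes \cite[Proposition~2.2]{EGGK26}.

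Your direct verification isolates exactly what is needed. First, halves of elements of $I$ stay in $I$, because $|I|$ is odd and halving in $X$ is unique. Second, commutators are bilinear in class two, which gives $[a\circ b,-a]_+=[\lambda_a(b),-a]_+$. Third, you use the identity
\[
\lambda^{\Br}_a(b)=\lambda_a(b)+\tfrac12[\lambda_a(b),-a]_+ .
\]
Since $[\lambda_a(b),-a]_+=[a,\lambda_a(b)]_+$, in the situation of Theorem~\ref{thm:inverse-baer} this identity is just $\mu_a=(\id+N_a)\lambda^X_a$. That is the inverted form of the relation $\lambda^X_a=(\id-N_a)\mu_a$ established in that proof, so your argument fits naturally with the rest of the paper.

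The citation buys brevity. Your route buys self-containment: it makes Corollary~\ref{cor:simplicity-transfer} independent of \cite{EGGK26} altogether. In that application $\Br(X)=C$ is already known to be a left brace from Theorem~\ref{thm:inverse-baer}, so not even \cite[Theorem~1.3]{EGGK26} is needed there.
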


\begin{proof}
This is \cite[Proposition~2.2]{EGGK26}.
\end{proof}

Theorem~\ref{thm:baer-ideals} is one-sided: an ideal of $\Br(X)$ need not in
general be an ideal of $X$. For our purposes this is exactly the useful
direction. If a skew brace $X$ is constructed in such a way that
$\Br(X)$ is a prescribed simple left brace, then $X$ is automatically simple.

\section{Inverse Baer deformations}

We now isolate a general mechanism producing such skew braces. Let $C=(A,\oplus,\circ)$ be a finite left brace of odd order. Since
$(A,\oplus)$ has odd order, the doubling map is an automorphism and the symbol
$\frac12x$ is unambiguous for every $x\in A$.

\begin{definition}\label{def:admissible}
A map $ \kappa:A\times A\longrightarrow A$
is called \emph{Baer-admissible} for $C$ if the following conditions hold:
\begin{enumerate}
\item[(B1)] The map $\kappa:A\times A\to A$ is biadditive with respect to $\oplus$, that is, $\kappa(x\oplus y,z)=\kappa(x,z)\oplus\kappa(y,z)$ and $\kappa(x,y\oplus z)=\kappa(x,y)\oplus\kappa(x,z)$
for all $x,y,z\in A$, and it is alternating, meaning that $\kappa(x,x)=0$
for every $x\in A$.
\item[(B2)] if $K=\kappa(A,A)$, then $ \kappa(A,K)=\kappa(K,A)=0;$
\item[(B3)] $\kappa$ is equivariant under the lambda action of $C$, that is, $ \lambda_a^C(\kappa(x,y))
 =\kappa(\lambda_a^C(x),\lambda_a^C(y))$
for all $a,x,y\in A$.
\end{enumerate}
\end{definition}

Because $\kappa$ is alternating and biadditive, it is skew-symmetric: $ \kappa(x,y)=-\kappa(y,x).$
Given a Baer-admissible map $\kappa$, define a new operation on $A$ by
\begin{equation}\label{eq:inverse-baer-addition}
 x+y=x\oplus y\oplus\frac12\kappa(x,y).
\end{equation}
We denote the resulting candidate skew brace by
$ \IB_\kappa(C)=(A,+,\circ).$

\begin{theorem}[Inverse Baer deformation]\label{thm:inverse-baer}
Let $C=(A,\oplus,\circ)$ be a finite left brace of odd order and let $\kappa$
be Baer-admissible. Then the operation \eqref{eq:inverse-baer-addition} makes
$(A,+)$ a group of nilpotency class at most two, the multiplication $\circ$
together with $+$ makes $ X=\IB_\kappa(C)=(A,+,\circ)$
a skew brace, and $ \Br(X)=C.$
Moreover,
\begin{equation}\label{eq:commutator-kappa}
 [x,y]_+=\kappa(x,y)
\end{equation}
for all $x,y\in A$. Hence, if $\kappa\neq0$, then $(A,+)$ is non-abelian and
has nilpotency class exactly two.
\end{theorem}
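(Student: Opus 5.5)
I will verify the group axioms for $(A,+)$ directly, then compute commutators, then check the skew brace identity, and finally recover $\oplus$ via Baer's formula. Throughout I write $\kappa$-values additively in $(A,\oplus)$ and use (B1)--(B2) freely: $\kappa$ is biadditive and skew-symmetric, and $\kappa(K,A)=\kappa(A,K)=0$. Because $\frac12$ is an automorphism of the abelian group $(A,\oplus)$, we also get $\kappa(\tfrac12 z,w)=\tfrac12\kappa(z,w)$.

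\emph{Group structure.} For associativity, expand $(x+y)+z=x\oplus y\oplus\frac12\kappa(x,y)\oplus z\oplus\frac12\kappa(x\oplus y\oplus\frac12\kappa(x,y),z)$. By (B2) the term $\kappa(\frac12\kappa(x,y),z)$ vanishes, so this equals $x\oplus y\oplus z\oplus\frac12(\kappa(x,y)\oplus\kappa(x,z)\oplus\kappa(y,z))$. The same expression arises for $x+(y+z)$, which gives associativity. The identity is $0$, because $\kappa(x,0)=0$. The inverse of $x$ is the $\oplus$-inverse $\ominus x$, since $\kappa(x,\ominus x)=\ominus\kappa(x,x)=0$. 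A similar expansion shows that sums and differences in $+$ differ from those in $\oplus$ only by elements of $K$, and that adding an element of $K$ is the same in both operations. Computing $x+y-x-y$ with this observation gives $[x,y]_+=\frac12\kappa(x,y)\ominus\frac12\kappa(y,x)=\kappa(x,y)$ by skew-symmetry, which is \eqref{eq:commutator-kappa}. Commutators then lie in $K$. Moreover $K$ is central in $(A,+)$, since for $k\in K$ we have $k+y=k\oplus y=y+k$ by (B2). Hence $\cl(A,+)\le2$, and the class is exactly two if $\kappa\ne0$.

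\emph{Skew brace axiom.} Here I use that $\lambda^C_a$ is an $\oplus$-automorphism and that $a\circ b=a\oplus\lambda_a^C(b)$. By (B3) and additivity, $\lambda^C_a$ commutes with $\frac12$ and preserves $\kappa$, so it is also an automorphism of $(A,+)$. It then suffices to show $a\circ b=a+\lambda^C_a(b)$ for every $a,b$, i.e.\ that the $\lambda$-map of $X$ equals $\lambda^C$. Indeed, if that holds, then
\[
a\circ(b+c)=a+\lambda^C_a(b)+\lambda^C_a(c)=a\circ b-a+a\circ c,
\]
which is exactly the skew brace identity. I expect this step to be the main obstacle, since $a+\lambda_a(b)=a\oplus\lambda_a(b)\oplus\frac12\kappa(a,\lambda_a(b))$ carries an extra term $\frac12\kappa(a,\lambda_a(b))$ that is not zero in general. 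So the naive identification of the two $\lambda$-maps fails. I would instead define $\lambda^X_a(b)=-a+a\circ b$ and compute it: using $-a=\ominus a$ and the expansion rules above, $\lambda^X_a(b)=\lambda^C_a(b)\oplus\frac12\kappa(\ominus a,a\oplus\lambda^C_a b)=\lambda^C_a(b)\ominus\frac12\kappa(a,\lambda^C_a b)$. The map $b\mapsto\lambda_a^C(b)\ominus\frac12\kappa(a,\lambda_a^C b)$ is a composite of $\lambda_a^C$ with the shear $\sigma_a:y\mapsto y\ominus\frac12\kappa(a,y)$. One checks that $\sigma_a$ is a $+$-automorphism: it fixes $K$ pointwise, and the central correction terms cancel. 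Its inverse is $y\mapsto y\oplus\frac12\kappa(a,y)$, by (B2). The skew brace axiom is equivalent to $\lambda^X_a$ being a $+$-automorphism, so it follows. If this becomes messy, I would instead verify the identity $a\circ(b+c)=a\circ b-a+a\circ c$ directly by expanding both sides into $a\oplus\lambda b\oplus\lambda c$ plus $\frac12\kappa$-terms and comparing, using (B3) on $\kappa(b,c)$.

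\emph{Recovering $C$.} Baer's addition gives $x+y+\frac12[y,x]_+=x\oplus y\oplus\frac12\kappa(x,y)\oplus\frac12\kappa(y,x)=x\oplus y$, using \eqref{eq:commutator-kappa} and the fact that adding elements of $K$ agrees in both operations. Hence $\Br(X)=(A,\oplus,\circ)=C$.
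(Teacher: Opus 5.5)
Your proposal is correct and follows essentially the paper's route: the same expansions give associativity, inverses and $[x,y]_+=\kappa(x,y)$, you obtain the same formula $\lambda^X_a(b)=\lambda^C_a(b)\ominus\frac12\kappa(a,\lambda^C_a(b))$ (your shear $\sigma_a$ is the paper's $\id-N_a$), and $\oplus$ is recovered from Baer's formula in the same way. The difference is cosmetic: you show separately that $\lambda^C_a$ (via (B3)) and the shear (via (B2)) are $+$-automorphisms, whereas the paper checks directly that their composite is $\oplus$-additive and $\kappa$-equivariant; your abandoned first attempt, reducing to $a\circ b=a+\lambda^C_a(b)$, can simply be dropped.
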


\begin{proof}
\begingroup
\setlength{\abovedisplayskip}{3pt plus 1pt minus 1pt}
\setlength{\belowdisplayskip}{3pt plus 1pt minus 1pt}
\setlength{\abovedisplayshortskip}{2pt plus 1pt minus 1pt}
\setlength{\belowdisplayshortskip}{2pt plus 1pt minus 1pt}
\setlength{\jot}{2pt}

We first prove that $+$ is a group operation. Let $x,y,z\in A$. By definition,
$x+y=x\oplus y\oplus\frac12\kappa(x,y)$, and hence
\begin{align*}
(x+y)+z
&=(x+y)\oplus z\oplus\frac12\kappa(x+y,z)\\
&=x\oplus y\oplus z\oplus\frac12\kappa(x,y)
  \oplus\frac12\kappa\!\left(
  x\oplus y\oplus\frac12\kappa(x,y),z\right).
\end{align*}
By biadditivity,
\[
\kappa\!\left(x\oplus y\oplus\frac12\kappa(x,y),z\right)
=\kappa(x,z)\oplus\kappa(y,z)
 \oplus\kappa\!\left(\frac12\kappa(x,y),z\right).
\]
Since $\kappa(x,y)\in K$, condition \textup{(B2)} gives
$\kappa(\frac12\kappa(x,y),z)=0$. Therefore
\[
(x+y)+z
=x\oplus y\oplus z
 \oplus\frac12\kappa(x,y)
 \oplus\frac12\kappa(x,z)
 \oplus\frac12\kappa(y,z).
\]
Similarly,
\begin{align*}
x+(y+z)
&=x\oplus y\oplus z\oplus\frac12\kappa(y,z)
  \oplus\frac12\kappa\!\left(
  x,y\oplus z\oplus\frac12\kappa(y,z)\right)\\
&=x\oplus y\oplus z
  \oplus\frac12\kappa(y,z)
  \oplus\frac12\kappa(x,y)
  \oplus\frac12\kappa(x,z),
\end{align*}
where $\kappa(x,\frac12\kappa(y,z))=0$ by \textup{(B2)}. Since
$(A,\oplus)$ is abelian, the two expressions coincide, and $+$ is associative.

The zero element $0$ of $(A,\oplus)$ is also the identity for $+$. Indeed,
biadditivity gives $\kappa(x,0)=\kappa(0,x)=0$, and hence
\[
x+0=x\oplus0\oplus\frac12\kappa(x,0)=x,
\qquad
0+x=0\oplus x\oplus\frac12\kappa(0,x)=x.
\]
Let $-x$ denote the inverse of $x$ in $(A,\oplus)$. Since
\[
\kappa(x,-x)=-\kappa(x,x)=0
\]
by biadditivity and alternation, we obtain
\[
x+(-x)=x\oplus(-x)\oplus\frac12\kappa(x,-x)=0.
\]
Similarly, $(-x)+x=0$. Thus the inverse of $x$ with respect to $+$ coincides
with its inverse with respect to $\oplus$, and $(A,+)$ is a group.

We next compute its commutator. Since the inverses with respect to $+$ and
$\oplus$ coincide,
\[
[x,y]_+=x+y-x-y.
\]
First,
\begin{align*}
(x+y)+(-x)
&=x\oplus y\oplus\frac12\kappa(x,y)\oplus(-x)\\
&\qquad\oplus\frac12
\kappa\!\left(x\oplus y\oplus\frac12\kappa(x,y),-x\right).
\end{align*}
By biadditivity and \textup{(B2)},
\[
\kappa\!\left(x\oplus y\oplus\frac12\kappa(x,y),-x\right)
=\kappa(x,-x)\oplus\kappa(y,-x)=\kappa(x,y),
\]
where we used $\kappa(x,-x)=0$ and skew-symmetry. Hence
\[
(x+y)+(-x)=y\oplus\kappa(x,y).
\]
Adding $-y$, we get
\begin{align*}
[x,y]_+
&=\bigl(y\oplus\kappa(x,y)\bigr)+(-y)\\
&=y\oplus\kappa(x,y)\oplus(-y)
  \oplus\frac12\kappa\bigl(y\oplus\kappa(x,y),-y\bigr).
\end{align*}
Now
\[
\kappa\bigl(y\oplus\kappa(x,y),-y\bigr)
=\kappa(y,-y)\oplus\kappa(\kappa(x,y),-y)=0
\]
by alternation and \textup{(B2)}. Therefore
\begin{equation}\label{eq:commutator-kappa}
[x,y]_+=\kappa(x,y).
\end{equation}
Thus $(A,+)'$ is contained in $K$. If $k\in K$ and $a\in A$, then
$[a,k]_+=\kappa(a,k)=0$ by \textup{(B2)}. Hence
$K\leq Z(A,+)$ and $\cl(A,+)\leq2$. Equation
\eqref{eq:commutator-kappa} also shows that the class is exactly two when
$\kappa\neq0$.

It remains to verify the skew brace compatibility. Recall that the skew brace
identity
\[
a\circ(x+y)=a\circ x-_+a+a\circ y
\]
is equivalent to requiring
$\lambda_a^X\in\Aut(A,+)$ for every $a\in A$, where
$\lambda_a^X(x)=-_+a+(a\circ x)$; see, for instance, \cite{GV17}.

Fix $a\in A$ and put $\mu_a=\lambda_a^C\in\Aut(A,\oplus)$. Since
$C=(A,\oplus,\circ)$ is a left brace,
$a\circ x=a\oplus\mu_a(x)$. Using the fact that the inverses with respect
to $+$ and $\oplus$ coincide, we obtain
\begin{align}
\lambda_a^X(x)
&=(-a)+(a\circ x)\notag\\
&=(-a)\oplus(a\circ x)
  \oplus\frac12\kappa(-a,a\circ x)\notag\\
&=\mu_a(x)
  \oplus\frac12\kappa(-a,a)
  \oplus\frac12\kappa(-a,\mu_a(x))\notag\\
&=\mu_a(x)\oplus
  \left(-\frac12\kappa(a,\mu_a(x))\right).
\label{eq:new-lambda}
\end{align}
For $a\in A$, define
\[
N_a:A\longrightarrow A,
\qquad
N_a(x)=\frac12\kappa(a,x).
\]
By biadditivity, $N_a$ is an endomorphism of $(A,\oplus)$. Moreover,
\[
N_a^2(x)
=\frac12\kappa\left(a,\frac12\kappa(a,x)\right)=0
\]
by \textup{(B2)}, since $\kappa(a,x)\in K$. Hence $N_a^2=0$, and
\[
(\id-N_a)(\id+N_a)=\id-N_a^2=\id.
\]
Thus $\id-N_a$ is invertible, with inverse $\id+N_a$, and
\eqref{eq:new-lambda} becomes
\begin{equation}\label{eq:new-lambda-operator}
\lambda_a^X=(\id-N_a)\mu_a.
\end{equation}
In particular, $\lambda_a^X$ is bijective.

Put $F=\lambda_a^X=(\id-N_a)\mu_a$. Since $\mu_a$ and $N_a$ are
endomorphisms of $(A,\oplus)$, the map $F$ is $\oplus$-additive. We claim that
\[
F(\kappa(x,y))=\kappa(F(x),F(y)).
\]
Indeed,
\begin{align*}
\kappa(F(x),F(y))
&=\kappa\bigl(\mu_a(x)-N_a(\mu_a(x)),
              \mu_a(y)-N_a(\mu_a(y))\bigr)\\
&=\kappa(\mu_a(x),\mu_a(y)),
\end{align*}
because $N_a(\mu_a(x)),N_a(\mu_a(y))\in K$, so all terms involving one of
these elements vanish by \textup{(B2)}. By \textup{(B3)},
\begin{equation}\label{eq:kappa-F-one}
\kappa(F(x),F(y))
=\kappa(\mu_a(x),\mu_a(y))
=\mu_a(\kappa(x,y)).
\end{equation}
On the other hand,
\begin{align*}
F(\kappa(x,y))
&=(\id-N_a)\mu_a(\kappa(x,y))\\
&=\mu_a(\kappa(x,y))
-\frac12\kappa\bigl(a,\mu_a(\kappa(x,y))\bigr).
\end{align*}
Again by \textup{(B3)},
$\mu_a(\kappa(x,y))=\kappa(\mu_a(x),\mu_a(y))\in K$; hence the last term
vanishes by \textup{(B2)}. Therefore
\begin{equation}\label{eq:kappa-F-two}
F(\kappa(x,y))=\mu_a(\kappa(x,y)).
\end{equation}
Combining \eqref{eq:kappa-F-one} and \eqref{eq:kappa-F-two}, we obtain
$F(\kappa(x,y))=\kappa(F(x),F(y))$. Consequently,
\begin{align*}
F(x+y)
&=F\left(x\oplus y\oplus\frac12\kappa(x,y)\right)\\
&=F(x)\oplus F(y)\oplus\frac12F(\kappa(x,y))\\
&=F(x)\oplus F(y)\oplus\frac12\kappa(F(x),F(y))
 =F(x)+F(y).
\end{align*}
Thus $F$ is a bijective homomorphism of $(A,+)$, and therefore
$\lambda_a^X\in\Aut(A,+)$ for every $a\in A$. Hence
$X=(A,+,\circ)$ is a skew brace.

Finally, we determine its Baer transform. By
\eqref{eq:commutator-kappa} and skew-symmetry,
\[
[y,x]_+=\kappa(y,x)=-\kappa(x,y).
\]
If $u\in A$ and $k\in K$, then $\kappa(u,k)=0$ by \textup{(B2)}, and hence
\[
u+k=u\oplus k.
\]
Thus $+$ and $\oplus$ coincide whenever one of the summands belongs to $K$;
in particular, they coincide on $K$, and the half of an element of $K$ is
the same with respect to both operations. Since
$[y,x]_+=-\kappa(x,y)\in K$, the Baer addition is
\begin{align*}
x\oplus_{\Br}y
&=x+y+\frac12[y,x]_+\\
&=\left(x\oplus y\oplus\frac12\kappa(x,y)\right)
  +\left(-\frac12\kappa(x,y)\right)\\
&=x\oplus y
  \oplus\frac12\kappa(x,y)
  \oplus\left(-\frac12\kappa(x,y)\right)
 =x\oplus y.
\end{align*}
Hence the Baer addition of $X$ is precisely the original operation $\oplus$.
Since the multiplicative operation $\circ$ has not been changed, we conclude
that $\Br(X)=C$.

\endgroup
\end{proof}

The ideal-preservation theorem from \cite{EGGK26} immediately transfers
simplicity.

\begin{corollary}[Simplicity transfer]\label{cor:simplicity-transfer}
Let $C$ be a finite simple left brace of odd order and let $\kappa$ be
Baer-admissible for $C$. Then $\IB_\kappa(C)$ is simple.
\end{corollary}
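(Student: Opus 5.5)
The argument combines Theorem~\ref{thm:inverse-baer} with Theorem~\ref{thm:baer-ideals}. Put $X=\IB_\kappa(C)$. By Theorem~\ref{thm:inverse-baer}, $X$ is a skew brace on the set $A$. The group $(A,+)$ has nilpotency class at most two, and its order $|A|$ is odd. Moreover $\Br(X)=C$ as skew braces on the same underlying set: the multiplication $\circ$ is unchanged, and the Baer addition recovers $\oplus$. Hence $X$ satisfies the hypotheses of Theorem~\ref{thm:baer-ideals}.

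Since $C$ is simple, it is non-zero, so $A\neq0$ and $X$ is non-zero. Now let $I$ be an ideal of $X$. By Theorem~\ref{thm:baer-ideals}, $I$ is an ideal of $\Br(X)=C$. The ideal is the same subset of $A$ in both structures. Simplicity of $C$ then gives $I=0$ or $I=A$. Hence the only ideals of $X$ are $0$ and $X$, and $X$ is simple.

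The only point that needs care is making sure Theorem~\ref{thm:baer-ideals} really applies. This means checking that the Baer transform appearing there is computed with the same convention for $\frac12$ and commutators used in Theorem~\ref{thm:inverse-baer}. The final computation in that proof already confirms this. It also means noting that an ideal of $C$ is a statement about subsets of $A$, so "$I$ is an ideal of $\Br(X)$" and "$I$ is an ideal of $C$" coincide literally. Beyond this bookkeeping, I expect no real obstacle.
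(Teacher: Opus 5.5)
Your proposal is correct and follows essentially the same route as the paper: apply Theorem~\ref{thm:baer-ideals} to $X=\IB_\kappa(C)$, then use $\Br(X)=C$ from Theorem~\ref{thm:inverse-baer} to conclude that any ideal of $X$ is an ideal of the simple brace $C$. Your extra checks (odd order, additive class at most two, $X\neq0$, and matching conventions for $\frac12$ and commutators) are hypotheses the paper's short proof leaves implicit.
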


\begin{proof}
Put $X=\IB_\kappa(C)$, and let $I$ be an ideal of $X$. By
Theorem~\ref{thm:baer-ideals}, $I$ is an ideal of $\Br(X)$. By
Theorem~\ref{thm:inverse-baer}, $ \Br(X)=C.$
Since $C$ is simple, either $I=0$ or $I=X$.
\end{proof}

Thus the problem of constructing simple skew braces with non-abelian
class-two additive group can be approached in two independent steps: find a
simple left brace, and then find a non-zero Baer-admissible alternating map on
its additive group.

\section{A simple asymmetric-product seed}

\begingroup
\setlength{\abovedisplayskip}{1pt plus .5pt minus .5pt}
\setlength{\belowdisplayskip}{1pt plus .5pt minus .5pt}
\setlength{\abovedisplayshortskip}{0pt plus .5pt minus .5pt}
\setlength{\belowdisplayshortskip}{0pt plus .5pt minus .5pt}
\setlength{\jot}{0pt}
\setlength{\parskip}{0pt}

We now construct the simple left braces which will serve as the input for the inverse Baer deformation. We use a particular case of the asymmetric-product construction of Bachiller, Ced\'o, Jespers and Okni\'nski \cite[Section~5.2]{BCJO19}. We first record the specialization needed below.

\begin{proposition}[Special case of {\cite[Section~5.2 and Theorem~5.7]{BCJO19}}]
\label{prop:BCJO-special}
Let $p$ and $q$ be distinct primes, let $\gamma\in\F_q^\times$ have order $p$, and let $V$ be a finite-dimensional $\F_p$-vector space endowed with a non-degenerate symmetric bilinear form $B:V\times V\to\F_p$. Assume that $\sigma,\tau\in\Aut_{\F_p}(V)$ satisfy $|\sigma|=q$, $|\tau|=p$,
\[
B(\sigma u,\sigma v)=B(u,v),\qquad B(\tau u,\tau v)=B(u,v)
\]
for all $u,v\in V$, and $\tau\sigma=\sigma^\gamma\tau$. Let $T=V\rtimes\F_q$ be the semidirect product of trivial braces in which $a\in\F_q$ acts on $V$ as $\sigma^a$. Thus $(T,+)=V\times\F_q$. The construction of \cite[Section~5.2]{BCJO19} yields an asymmetric-product left brace $C=T\rtimes_\circ\F_p$. Hence, as a set,
\[
C=V\times\F_q\times\F_p,
\]
with
\begin{equation}\label{eq:general-seed-addition}
(u,a,s)\oplus(v,b,t)=\bigl(u+v,\ a+b,\ s+t+B(u,v)\bigr),
\end{equation}
and
\begin{equation}\label{eq:general-seed-lambda}
\lambda^C_{(u,a,s)}(v,b,t)
=\bigl(\sigma^a\tau^s(v),\ \gamma^s b,\ t-B(\sigma^a\tau^s(v),u)\bigr).
\end{equation}
Moreover, $C$ is simple if
\[
\operatorname{Im}(\sigma-\id)+\operatorname{Im}(\tau-\id)=V.
\]
In particular, $C$ is simple whenever $\sigma-\id$ is invertible.
\end{proposition}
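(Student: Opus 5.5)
The proposition is a specialization of \cite[Section~5.2 and Theorem~5.7]{BCJO19}, so I will not rebuild the asymmetric product from scratch. Instead, I will check three things: the data $(V,B,\sigma,\tau,\gamma)$ satisfy the hypotheses of that construction; the general formulas reduce to \eqref{eq:general-seed-addition} and \eqref{eq:general-seed-lambda}; and the simplicity criterion there becomes the stated condition $\operatorname{Im}(\sigma-\id)+\operatorname{Im}(\tau-\id)=V$.

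\textbf{The input data.} Take $T=V\rtimes\F_q$, the semidirect product of the trivial braces $V$ and $\F_q$, with $a$ acting as $\sigma^a$. This is well defined because $|\sigma|=q$. Take $S=\F_p$ as a trivial brace, acting on $T$ by $s\mapsto(\tau^s,\ b\mapsto\gamma^s b)$. The relation $\tau\sigma=\sigma^\gamma\tau$ is exactly what makes this map a brace automorphism of $T$: it is compatible with $\circ$ on $T$ because $\tau\sigma^b\tau^{-1}=\sigma^{\gamma b}$. It has order $p$ because $|\tau|=p$ and $\gamma$ has order $p$.

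The $2$-cocycle in the BCJO setup is $b((u,a),(v,b))=B(u,v)$, viewed in $\F_p$. I must verify two properties.
\begin{enumerate}
\item[(i)] It is symmetric and bilinear on $(T,+)$. This holds because $B$ is.
\item[(ii)] It satisfies the invariance conditions required there. These are $b(\lambda_x(y),\lambda_x(z))=b(y,z)$ for $x\in T$ (this uses $\sigma$-invariance of $B$) and $b(\alpha_s y,\alpha_s z)=b(y,z)$ for the $S$-action (this uses $\tau$-invariance of $B$).
\end{enumerate}
Unwinding the definitions of addition and $\lambda$ in the asymmetric product $T\rtimes_\circ S$ then gives \eqref{eq:general-seed-addition} and \eqref{eq:general-seed-lambda} directly. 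The correction term $-B(\sigma^a\tau^s v,u)$ comes from the cocycle term in the $\circ$-product, as in \cite[Section~5.2]{BCJO19}.

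\textbf{Simplicity.} I will use \cite[Theorem~5.7]{BCJO19}, whose hypotheses are essentially these:
\begin{enumerate}
\item[(a)] $B$ is non-degenerate.
\item[(b)] The $\F_q$-part acts without fixed points.
\item[(c)] $V$ is generated by $\operatorname{Im}(\lambda_x-\id)$ together with the images of $\alpha_s-\id$.
\end{enumerate}
Condition (b) holds since $\gamma\neq1$. Condition (c) is the stated condition $\operatorname{Im}(\sigma-\id)+\operatorname{Im}(\tau-\id)=V$. If I were arguing directly instead, I would take a nonzero ideal $I$ and argue as follows. Socle and annihilator considerations put $(0,0,\F_p)$ in $I$, using non-degeneracy of $B$. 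Then elements $\lambda_x(y)-y$ force $\operatorname{Im}(\sigma-\id)$ and $\operatorname{Im}(\tau-\id)$ into the $V$-component of $I$. Finally, $\gamma^s b-b$ yields all of $\F_q$.

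The last claim, that invertibility of $\sigma-\id$ suffices, is then immediate because $\operatorname{Im}(\sigma-\id)=V$.

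\textbf{Main obstacle.} I expect the hardest part to be faithfully matching the sign and ordering conventions of \cite{BCJO19} so that the $\lambda$-formula comes out exactly as in \eqref{eq:general-seed-lambda}. In particular, the term $-B(\sigma^a\tau^s v,u)$ depends on how the cocycle enters the $\circ$-product. If the conventions do not match, I would recompute $\lambda$ directly from $(u,a,s)\circ(v,b,t)$ and check the brace axiom by hand.
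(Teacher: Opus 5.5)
Your proposal is correct and follows essentially the same route as the paper. Both realize $C$ as the case $s=1$, $l_1=q$ of the asymmetric product in \cite[Section~5.2]{BCJO19}, treat $\tau\sigma=\sigma^\gamma\tau$ and the $\sigma$-, $\tau$-invariance of $B$ as the required compatibility conditions, and read off simplicity from \cite[Theorem~5.7]{BCJO19}, using that $\gamma-1$ is invertible in $\F_q$.

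One small correction: the multiplication of the asymmetric product carries no cocycle. Indeed, the stated formulas force $(u,a,s)\circ(v,b,t)=(u+\sigma^a\tau^s(v),\,a+\gamma^s b,\,s+t)$. So the term $-B(\sigma^a\tau^s(v),u)$ in $\lambda^C$ comes from the twisted addition when you compute $-x+x\circ y$, not from the $\circ$-product.
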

\begin{proof}
This is the case $s=1$, $l_1=q$ of \cite[Section~5.2]{BCJO19}. In the notation used there, the cyclic factor $\mathbb Z/(q)$ acts on $V$ through $\sigma$, while $\mathbb Z/(p)$ acts through $\tau$ and multiplication by $\gamma$ on $\mathbb Z/(q)$. The relation $\tau\sigma=\sigma^\gamma\tau$ is the required compatibility relation, while the invariance of $B$ means that $\sigma$ and $\tau$ belong to the orthogonal group of $B$. Since $\gamma\neq1$ and $q$ is prime, $\gamma-1$ is invertible in $\F_q$. The formulas above are the corresponding specialization of the asymmetric-product formulas, and \cite[Theorem~5.7]{BCJO19} gives the simplicity condition
\[
\operatorname{Im}(\sigma-\id)+\operatorname{Im}(\tau-\id)=V.
\]
Hence invertibility of $\sigma-\id$ is sufficient.
\end{proof}

We now construct data satisfying Proposition~\ref{prop:BCJO-special}. Let $p$ and $q$ be odd primes with $p\mid(q-1)$, and choose $\gamma\in\F_q^\times$ of order $p$. Put
\[
R=\F_p[x]/(1+x+\cdots+x^{q-1}),
\]
let $\xi$ denote the image of $x$ in $R$, and note that $\dim_{\F_p}R=q-1$. Define
\[
c(r)=\xi r,\qquad f(r(\xi))=r(\xi^\gamma).
\]
By \cite[Section~5.3]{BCJO19}, $c$ has order $q$, $f$ has order $p$, and
\begin{equation}\label{eq:fc-relation}
fc=c^\gamma f.
\end{equation}
Moreover, there is a non-degenerate symmetric bilinear form $\beta:R\times R\to\F_p$ characterized on $1,\xi,\ldots,\xi^{q-2}$ by
\begin{equation}\label{eq:beta}
\beta(\xi^i,\xi^j)=1-\delta_{ij}.
\end{equation}
Both $c$ and $f$ preserve $\beta$, and $c-\id_R$ is invertible.

Fix $m\ge1$ and set
\[
V_m=R^{2m},\qquad \sigma_m=c^{\oplus 2m},\qquad \tau_m=f^{\oplus 2m}.
\]
For $u=(u_1,\ldots,u_{2m})$ and $v=(v_1,\ldots,v_{2m})$ define
\begin{equation}\label{eq:B-form}
B_m(u,v)=\sum_{i=1}^{2m}\beta(u_i,v_i).
\end{equation}
Then $B_m$ is non-degenerate and symmetric, $|\sigma_m|=q$, $|\tau_m|=p$, and
\[
\tau_m\sigma_m=(fc)^{\oplus2m}=(c^\gamma f)^{\oplus2m}=\sigma_m^\gamma\tau_m.
\]
Since $c$ and $f$ preserve $\beta$,
\[
B_m(\sigma_m u,\sigma_m v)=B_m(u,v),\qquad
B_m(\tau_m u,\tau_m v)=B_m(u,v).
\]
Moreover,
\[
\sigma_m-\id_{V_m}=(c-\id_R)^{\oplus2m},
\]
so $\sigma_m-\id_{V_m}$ is invertible. Hence Proposition~\ref{prop:BCJO-special} applies.

Thus, first forming $T_m=V_m\rtimes\F_q$ with $a\in\F_q$ acting as $\sigma_m^a$, and then taking the asymmetric product with $\F_p$, we obtain a left brace
\[
C_{p,q}^{(m)}=T_m\rtimes_\circ\F_p
\]
whose underlying set is
\[
A_m=V_m\times\F_q\times\F_p.
\]
Its addition is
\begin{equation}\label{eq:seed-addition}
(u,a,s)\oplus(v,b,t)=\bigl(u+v,\ a+b,\ s+t+B_m(u,v)\bigr),
\end{equation}
and its lambda maps are
\begin{equation}\label{eq:seed-lambda}
\lambda^{C_{p,q}^{(m)}}_{(u,a,s)}(v,b,t)
=\bigl(\sigma_m^a\tau_m^s(v),\ \gamma^s b,\
t-B_m(\sigma_m^a\tau_m^s(v),u)\bigr).
\end{equation}
Notice that $A_m=V_m\times\F_q\times\F_p$ is the underlying set; the additive law is the twisted law \eqref{eq:seed-addition}. Since $B_m$ is symmetric, $(A_m,\oplus)$ is abelian, as expected for a left brace.

\begin{proposition}\label{prop:seed-simple}
For every $m\ge1$, the left brace $C_{p,q}^{(m)}$ is simple and
\[
|C_{p,q}^{(m)}|=p^{2m(q-1)+1}q.
\]
\end{proposition}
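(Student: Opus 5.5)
The plan is to deduce both assertions directly from Proposition~\ref{prop:BCJO-special}, using the data $(V_m,B_m,\sigma_m,\tau_m,\gamma)$ constructed above. The preceding discussion has already checked every hypothesis of that proposition, and we simply collect these checks:
\begin{itemize}
\item $p$ and $q$ are distinct primes, since $p\mid(q-1)$;
\item $\gamma\in\F_q^\times$ has order $p$;
\item $B_m$ is a non-degenerate symmetric bilinear form on $V_m$, being an orthogonal direct sum of copies of the non-degenerate symmetric form $\beta$;
\item $\sigma_m$ and $\tau_m$ preserve $B_m$, because $c$ and $f$ preserve $\beta$ componentwise;
\item $|\sigma_m|=|c|=q$ and $|\tau_m|=|f|=p$;
\item $\tau_m\sigma_m=\sigma_m^\gamma\tau_m$, which follows from \eqref{eq:fc-relation} applied in each coordinate.
\end{itemize}
Hence $C_{p,q}^{(m)}$ is a left brace with addition \eqref{eq:seed-addition} and lambda maps \eqref{eq:seed-lambda}.

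For simplicity, we invoke the final clause of Proposition~\ref{prop:BCJO-special}. It suffices that $\sigma_m-\id_{V_m}$ is invertible. This holds because
\[
\sigma_m-\id_{V_m}=(c-\id_R)^{\oplus 2m},
\]
and $c-\id_R$ is invertible on $R$. To see the latter, note that multiplication by $\xi-1$ is injective on $R=\F_p[x]/(\Phi)$, where $\Phi=1+x+\cdots+x^{q-1}$. Indeed, $\Phi(1)=q\not\equiv0\pmod p$, so $x-1$ is coprime to $\Phi$ in $\F_p[x]$ and $\xi-1$ is a unit in $R$. Consequently $\operatorname{Im}(\sigma_m-\id)=V_m$, the simplicity criterion $\operatorname{Im}(\sigma_m-\id)+\operatorname{Im}(\tau_m-\id)=V_m$ holds trivially, and $C_{p,q}^{(m)}$ is simple.

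For the order, the underlying set is $A_m=V_m\times\F_q\times\F_p$. Since $\dim_{\F_p}R=q-1$, we have $\dim_{\F_p}V_m=2m(q-1)$, and therefore
\[
|A_m|=p^{2m(q-1)}\cdot q\cdot p=p^{2m(q-1)+1}q.
\]

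The only step with real content is the applicability of Proposition~\ref{prop:BCJO-special} itself, in particular that the BCJO specialization yields exactly the formulas and the simplicity criterion stated there. Since we are allowed to assume that proposition, no genuine obstacle remains. The one point worth stating explicitly is the unit argument for $\xi-1$ in $R$, which the text above asserted without proof.
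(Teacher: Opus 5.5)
Your proposal is correct and takes essentially the same route as the paper. Simplicity follows from the invertibility of $\sigma_m-\id_{V_m}=(c-\id_R)^{\oplus 2m}$ via Proposition~\ref{prop:BCJO-special}, and the order follows from $\dim_{\F_p}V_m=2m(q-1)$. Your argument that $\xi-1$ is a unit in $R$ (because $\Phi(1)=q\not\equiv 0\pmod p$) is correct, and it supplies a justification the paper simply takes from the BCJO source.
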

\begin{proof}
Since $\sigma_m-\id_{V_m}$ is invertible, Proposition~\ref{prop:BCJO-special} gives simplicity. Since $\dim_{\F_p}V_m=2m(q-1)$,
\[
|C_{p,q}^{(m)}|=|V_m|qp=p^{2m(q-1)+1}q.
\]
\end{proof}

\endgroup

\section{The alternating deformation and the infinite family}

\begingroup
\setlength{\abovedisplayskip}{1pt plus .5pt minus .5pt}
\setlength{\belowdisplayskip}{1pt plus .5pt minus .5pt}
\setlength{\abovedisplayshortskip}{0pt plus .5pt minus .5pt}
\setlength{\belowdisplayshortskip}{0pt plus .5pt minus .5pt}
\setlength{\jot}{0pt}
\setlength{\parskip}{0pt}

We now deform the abelian addition of $C_{p,q}^{(m)}$. The even number of copies of $R$ allows us to associate to the symmetric form $\beta$ a non-degenerate alternating form on $V_m=R^{2m}$. Define
\begin{equation}\label{eq:omega}
\omega_m(u,v)
=\sum_{j=1}^{m}
\bigl(\beta(u_{2j-1},v_{2j})-\beta(u_{2j},v_{2j-1})\bigr).
\end{equation}

\begin{lemma}\label{lem:omega}
The form $\omega_m:V_m\times V_m\to\F_p$ is alternating and non-degenerate. Moreover,
\[
\omega_m(\sigma_m u,\sigma_m v)=\omega_m(u,v),\qquad
\omega_m(\tau_m u,\tau_m v)=\omega_m(u,v)
\]
for all $u,v\in V_m$.
\end{lemma}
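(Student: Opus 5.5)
The proof I have in mind reduces everything to the properties of $\beta$ recorded earlier: $\beta$ is symmetric and non-degenerate on $R$, and both $c$ and $f$ preserve $\beta$. I would write $\omega_m$ as the sum over $j=1,\dots,m$ of the forms
\[
\omega^{(j)}(u,v)=\beta(u_{2j-1},v_{2j})-\beta(u_{2j},v_{2j-1}),
\]
each depending only on the pair of coordinates $(u_{2j-1},u_{2j})$. This lets every claim be checked blockwise on $R\oplus R$.

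\emph{Alternating.} Bilinearity is clear, since $\beta$ is bilinear. For $u=v$ each summand becomes $\beta(u_{2j-1},u_{2j})-\beta(u_{2j},u_{2j-1})$. This vanishes by the symmetry of $\beta$, so $\omega_m(u,u)=0$.

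\emph{Non-degenerate.} Suppose $\omega_m(u,v)=0$ for all $v$. Fix $j$ and take $v$ with only the coordinate $v_{2j}=w$ non-zero. This gives $\beta(u_{2j-1},w)=0$ for all $w\in R$, so $u_{2j-1}=0$ by non-degeneracy of $\beta$. Next take only $v_{2j-1}=w$ non-zero. This gives $-\beta(u_{2j},w)=0$ for all $w$, so $u_{2j}=0$. Hence $u=0$. Equivalently, the Gram matrix of $\omega_m$ is block-diagonal with blocks $\begin{pmatrix}0&G\\-G&0\end{pmatrix}$, where $G$ is the invertible Gram matrix of $\beta$.

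\emph{Invariance.} Since $\sigma_m=c^{\oplus 2m}$ acts coordinatewise, we have $(\sigma_m u)_i=c(u_i)$. Therefore
\[
\omega_m(\sigma_m u,\sigma_m v)=\sum_j\bigl(\beta(cu_{2j-1},cv_{2j})-\beta(cu_{2j},cv_{2j-1})\bigr)=\omega_m(u,v),
\]
using the $c$-invariance of $\beta$. The same computation with $f$ in place of $c$ gives $\tau_m$-invariance.

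There is no genuine obstacle in this argument. The only point needing care is that invariance depends on $\sigma_m$ and $\tau_m$ acting diagonally by the \emph{same} map on every copy of $R$. If they did not, pairing copy $2j-1$ with copy $2j$ would mix two different actions and the cross terms would not be preserved. Because the same $c$ (respectively $f$) acts on every copy, no further work is needed.
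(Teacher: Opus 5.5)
Your proposal is correct and follows the paper's proof essentially step for step. Alternation comes from the symmetry of $\beta$, non-degeneracy from testing against vectors supported in a single coordinate together with non-degeneracy of $\beta$, and invariance from $c$ and $f$ preserving $\beta$ while acting diagonally on every copy of $R$; the block Gram-matrix description is a harmless extra.
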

\begin{proof}
Since $\beta$ is symmetric, each summand in \eqref{eq:omega} vanishes when $u=v$, so $\omega_m$ is alternating. Suppose $u=(u_1,\ldots,u_{2m})$ belongs to the radical of $\omega_m$. Fix $j$. Taking $v$ with only its $(2j)$-th coordinate nonzero gives
\[
\beta(u_{2j-1},v_{2j})=0
\]
for every $v_{2j}\in R$, hence $u_{2j-1}=0$. Taking $v$ with only its $(2j-1)$-th coordinate nonzero gives
\[
\beta(u_{2j},v_{2j-1})=0
\]
for every $v_{2j-1}\in R$, hence $u_{2j}=0$. Thus $u=0$, and $\omega_m$ is non-degenerate. Finally, since both $c$ and $f$ preserve $\beta$, each summand in \eqref{eq:omega} is preserved by $\sigma_m$ and $\tau_m$, proving the two invariance identities.
\end{proof}

Define
\begin{equation}\label{eq:kappa-family}
\kappa_m:A_m\times A_m\longrightarrow A_m,\qquad
\kappa_m((u,a,s),(v,b,t))=(0,0,\omega_m(u,v)).
\end{equation}

\begin{proposition}\label{prop:kappa-admissible}
The map $\kappa_m$ defined in \eqref{eq:kappa-family} is Baer-admissible for $C_{p,q}^{(m)}$.
\end{proposition}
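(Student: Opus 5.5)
We verify the three conditions (B1)--(B3) of Definition~\ref{def:admissible} directly from the formulas \eqref{eq:seed-addition}, \eqref{eq:seed-lambda} and \eqref{eq:kappa-family}. The guiding observation is that $\kappa_m$ only sees the $V_m$-component and only outputs into the last $\F_p$-coordinate. The subgroup $K=\{0\}\times\{0\}\times\F_p$ plays a special role: on it the $\oplus$-law is ordinary addition, because $B_m(0,\cdot)=0$.

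For (B1), first note that the $V_m$-component of $x\oplus y$ is $u+v$. The twisting term $B_m(u,v)$ lands in the third coordinate, which $\kappa_m$ ignores. Hence $\kappa_m(x\oplus y,z)=(0,0,\omega_m(u+v,w))$. Next, since elements of $K$ have zero $V_m$-component, $\oplus$ restricted to $K$ is plain addition in $\F_p$. So this equals $(0,0,\omega_m(u,w))\oplus(0,0,\omega_m(v,w))$ by bilinearity of $\omega_m$. The second variable is handled the same way. Alternation $\kappa_m(x,x)=0$ is immediate from Lemma~\ref{lem:omega}.

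For (B2), observe that $K=\kappa_m(A_m,A_m)\subseteq\{0\}\times\{0\}\times\F_p$; in fact equality holds since $\omega_m\neq0$. Every element of $K$ has $V_m$-component $0$, so $\kappa_m(A_m,K)=\kappa_m(K,A_m)=0$, because $\omega_m(u,0)=\omega_m(0,v)=0$.

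The only substantive step is (B3). Put $g=\sigma_m^a\tau_m^s$ for $(u,a,s)\in A_m$. On the left-hand side, \eqref{eq:seed-lambda} applied to $(0,0,\omega_m(v,w))$ gives
\[
\bigl(0,0,\omega_m(v,w)-B_m(0,u)\bigr)=(0,0,\omega_m(v,w)).
\]
On the right-hand side, $\lambda^C_{(u,a,s)}$ maps the $V_m$-components $v,w$ to $g(v),g(w)$. The extra $\F_q$ and $\F_p$ components and the $-B_m(\cdot,u)$ correction are irrelevant, since $\kappa_m$ ignores them. Thus the right-hand side is $(0,0,\omega_m(gv,gw))$. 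Since $\sigma_m$ and $\tau_m$ preserve $\omega_m$ by Lemma~\ref{lem:omega}, so does every product $g$ of their powers. The two sides therefore agree.

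I expect no real obstacle. The one point needing care is in (B3): the lambda map acts trivially on the last coordinate of elements of $K$, because the correction term involves $B_m$ evaluated at the zero vector. It is this fact that makes $\kappa_m$ equivariant even though $\lambda^C$ has a non-linear-looking third coordinate.
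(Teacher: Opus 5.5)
Your proposal is correct and follows the same route as the paper. (B1) and (B2) come from $\kappa_m$ depending only on the $V_m$-coordinates and taking values in $\{0\}\times\{0\}\times\F_p$, and (B3) comes from the $\omega_m$-invariance of $\sigma_m^a\tau_m^s$ together with $\lambda^C_g(0,0,z)=(0,0,z)$; you also spell out that $\oplus$ restricts to ordinary addition on $\{0\}\times\{0\}\times\F_p$, which the paper's biadditivity argument leaves implicit.
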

\begin{proof}
Biadditivity with respect to $\oplus$ follows from the biadditivity of $\omega_m$, since the $V_m$-component of \eqref{eq:seed-addition} is the ordinary sum in $V_m$, while alternation follows from Lemma~\ref{lem:omega}. Moreover,
\[
\kappa_m(A_m,A_m)\subseteq Z_0:=\{(0,0,z):z\in\F_p\},
\]
and, since $\kappa_m$ depends only on the $V_m$-coordinates,
\[
\kappa_m(A_m,Z_0)=\kappa_m(Z_0,A_m)=0.
\]
Thus \textup{(B2)} holds. It remains to verify lambda-equivariance. Let $g=(u,a,s)$, $x=(v,b,t)$ and $y=(w,d,r)$. By \eqref{eq:seed-lambda}, the $V_m$-components of $\lambda_g(x)$ and $\lambda_g(y)$ are $Lv$ and $Lw$, where $L=\sigma_m^a\tau_m^s$. Since $\sigma_m$ and $\tau_m$ preserve $\omega_m$, so does $L$, and hence
\[
\kappa_m(\lambda_g(x),\lambda_g(y))
=(0,0,\omega_m(Lv,Lw))
=(0,0,\omega_m(v,w)).
\]
On the other hand, \eqref{eq:seed-lambda} gives $\lambda_g(0,0,z)=(0,0,z)$ for every $z\in\F_p$. Therefore
\[
\lambda_g(\kappa_m(x,y))
=(0,0,\omega_m(v,w))
=\kappa_m(\lambda_g(x),\lambda_g(y)),
\]
so \textup{(B3)} holds.
\end{proof}

We may now apply the inverse Baer deformation theorem.

\begin{theorem}\label{thm:family}
Let $p$ and $q$ be odd primes satisfying $p\mid(q-1)$. For every $m\ge1$,
\[
X_{p,q}^{(m)}:=\IB_{\kappa_m}(C_{p,q}^{(m)})
\]
is a finite simple skew brace of order
\[
|X_{p,q}^{(m)}|=p^{2m(q-1)+1}q.
\]
Moreover,
\[
(X_{p,q}^{(m)},+)\cong E_{p,q}^{(m)}\times C_q,
\]
where $E_{p,q}^{(m)}$ is an extraspecial $p$-group of order $p^{2m(q-1)+1}$ and exponent $p$. In particular, $(X_{p,q}^{(m)},+)$ is non-abelian and nilpotent of class two.
\end{theorem}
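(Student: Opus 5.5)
Simplicity and the order come directly from earlier results. By Proposition~\ref{prop:seed-simple}, $C_{p,q}^{(m)}$ is a finite simple left brace of order $p^{2m(q-1)+1}q$, which is odd since $p,q$ are odd. By Proposition~\ref{prop:kappa-admissible}, $\kappa_m$ is Baer-admissible. Therefore Theorem~\ref{thm:inverse-baer} shows that $X_{p,q}^{(m)}$ is a skew brace on the same underlying set $A_m$, and Corollary~\ref{cor:simplicity-transfer} shows it is simple. Since $\omega_m\neq0$ by Lemma~\ref{lem:omega}, we have $\kappa_m\neq0$, so $(A_m,+)$ is non-abelian of class exactly two.

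The substance lies in identifying the additive group. Using \eqref{eq:seed-addition} and \eqref{eq:inverse-baer-addition}, the new addition is explicitly
\[
(u,a,s)+(v,b,t)=\bigl(u+v,\ a+b,\ s+t+B_m(u,v)+\tfrac12\omega_m(u,v)\bigr).
\]
The $\F_q$-coordinate is untouched, and no other coordinate depends on it. Hence the projection to $\F_q$ is a homomorphism. The subsets $P=\{(u,0,s)\}$ and $Q=\{(0,a,0)\}$ are subgroups with $P\cap Q=0$, $|P|=p^{2m(q-1)+1}$ and $|Q|=q$. Elements of $Q$ commute with everything, because $\kappa_m$ vanishes whenever a $V_m$-coordinate is $0$. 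So $(A_m,+)=P\times Q\cong P\times C_q$. Equivalently, $P$ is the Sylow $p$-subgroup of a nilpotent group.

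It remains to show that $E:=P$ is extraspecial of exponent $p$. Put $Z_0=\{(0,0,z)\}$, which has order $p$.
\begin{itemize}
\item \emph{Center.} By \eqref{eq:commutator-kappa}, $[(u,0,s),(v,0,t)]_+=(0,0,\omega_m(u,v))$. Non-degeneracy of $\omega_m$ (Lemma~\ref{lem:omega}) gives $Z(E)=Z_0$.
\item \emph{Derived subgroup.} Since $\omega_m\neq0$ and $\omega_m$ is $\F_p$-bilinear, its values fill $\F_p$, so $E'=Z_0$.
\item \emph{Frattini subgroup.} Once exponent $p$ is known, $\Phi(E)=E'E^p=Z_0$.
\end{itemize}
Together these show $E$ is extraspecial of order $p^{2m(q-1)+1}$.

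The one step needing real computation, and so the main obstacle, is exponent $p$. Inverses for $+$ and $\oplus$ coincide, and $\kappa_m(x,x)=0$, so multiples of a single element agree in both laws: $nx$ is the same under $+$ and $\oplus$. Thus it suffices to check that $(A_m,\oplus)$ restricted to $P$ has exponent $p$. By induction, the $n$-fold $\oplus$-multiple of $(u,0,s)$ is
\[
\bigl(nu,\ 0,\ ns+\tbinom n2 B_m(u,u)\bigr).
\]
At $n=p$ this vanishes, because $p$ is odd and hence $p\mid\binom p2$. Therefore $E$ has exponent $p$, which completes the identification $(X_{p,q}^{(m)},+)\cong E_{p,q}^{(m)}\times C_q$.
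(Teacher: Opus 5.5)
Your proof is correct. Its skeleton matches the paper's: simplicity via Corollary~\ref{cor:simplicity-transfer}, splitting off the $\F_q$-coordinate, then checking $Z(E)=E'=\Phi(E)\cong C_p$. Where you differ is in how you identify $E$.

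The paper makes the coboundary change of coordinates $(u,s)\mapsto(u,s-\frac12B_m(u,u))$. This removes the symmetric cocycle $B_m$ and puts the law on $V_m\times\F_p$ into the Heisenberg form $(u+v,\,s+t+\frac12\omega_m(u,v))$. Commutators, the center and the identity $(u,s)^k=(ku,ks)$ are then read off directly.

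You instead stay in the original coordinates. You take commutators from the general formula $[x,y]_+=\kappa_m(x,y)$ of Theorem~\ref{thm:inverse-baer}. For the exponent, you note that inverses agree and $\kappa_m(x,x)=0$, so $+$-multiples and $\oplus$-multiples of a single element coincide. This reduces exponent $p$ to the computation $p\cdot(u,0,s)=(0,0,\binom p2B_m(u,u))=0$ in the abelian group $(A_m,\oplus)$, which holds because $p$ is odd.

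Your route relies only on general features of inverse Baer deformations. In fact it shows that for any Baer-admissible $\kappa$, the groups $(A,+)$ and $(A,\oplus)$ have the same cyclic subgroups, hence the same element orders and exponent. The paper's normal form instead recognizes $E_{p,q}^{(m)}$ explicitly as the standard Heisenberg group of the symplectic space $(V_m,\omega_m)$. That makes it visible that the symmetric part $B_m$ has no effect on the isomorphism type of the additive group.
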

\begin{proof}
By Proposition~\ref{prop:kappa-admissible} and Theorem~\ref{thm:inverse-baer}, the operation
\begin{equation}\label{eq:family-addition}
x+y=x\oplus y\oplus\frac12\kappa_m(x,y)
\end{equation}
defines a skew brace $X_{p,q}^{(m)}$ with
\[
\Br(X_{p,q}^{(m)})=C_{p,q}^{(m)}.
\]
Since $C_{p,q}^{(m)}$ is simple by Proposition~\ref{prop:seed-simple}, Corollary~\ref{cor:simplicity-transfer} implies that $X_{p,q}^{(m)}$ is simple. The underlying set is unchanged, so
\[
|X_{p,q}^{(m)}|=p^{2m(q-1)+1}q.
\]

Using \eqref{eq:seed-addition} and \eqref{eq:kappa-family}, the new additive law is
\begin{equation}\label{eq:explicit-new-addition}
(u,a,s)+(v,b,t)
=\left(u+v,\ a+b,\ s+t+B_m(u,v)+\frac12\omega_m(u,v)\right).
\end{equation}
The $\F_q$-coordinate is independent of the other coordinates, hence
\[
(X_{p,q}^{(m)},+)\cong P_m\times C_q,
\]
where $P_m$ is the group on $V_m\times\F_p$ with multiplication
\[
(u,s)(v,t)
=\left(u+v,\ s+t+B_m(u,v)+\frac12\omega_m(u,v)\right).
\]
Set
\[
Q_m(u)=\frac12B_m(u,u).
\]
Since $B_m$ is symmetric,
\[
Q_m(u+v)-Q_m(u)-Q_m(v)=B_m(u,v).
\]
Thus the change of coordinates $(u,s)\mapsto(u,s-Q_m(u))$ identifies $P_m$ with the group on $V_m\times\F_p$ having multiplication
\begin{equation}\label{eq:heisenberg-law}
(u,s)(v,t)=\left(u+v,\ s+t+\frac12\omega_m(u,v)\right).
\end{equation}
From \eqref{eq:heisenberg-law},
\[
[(u,s),(v,t)]=(0,\omega_m(u,v)).
\]
Since $\omega_m$ is non-degenerate, it is nonzero, and its image is therefore all of the one-dimensional space $\F_p$. Hence
\[
P_m'=\{(0,z):z\in\F_p\}.
\]
Moreover, $(u,s)$ is central if and only if $\omega_m(u,v)=0$ for every $v\in V_m$, which by non-degeneracy is equivalent to $u=0$. Thus
\[
Z(P_m)=P_m'=\{(0,z):z\in\F_p\}\cong C_p.
\]
Since $\omega_m(u,u)=0$, repeated use of \eqref{eq:heisenberg-law} gives $(u,s)^k=(ku,ks)$ for every integer $k$, and hence $(u,s)^p=(0,0)$. Thus $P_m$ has exponent $p$ and
\[
\Phi(P_m)=P_m'P_m^p=P_m'=Z(P_m).
\]
Therefore $P_m$ is extraspecial. Finally,
\[
|P_m|=|V_m|p=p^{2m(q-1)+1}.
\]
Writing $P_m=E_{p,q}^{(m)}$, we obtain
\[
(X_{p,q}^{(m)},+)\cong E_{p,q}^{(m)}\times C_q,
\]
and the additive group is non-abelian and nilpotent of class two.
\end{proof}
\begin{corollary}\label{cor:infinite-family}
There exist infinitely many pairwise non-isomorphic finite simple skew braces
whose additive groups are non-abelian nilpotent groups of class two.
\end{corollary}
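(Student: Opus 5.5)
Fix an admissible pair of odd primes $p,q$ with $p\mid(q-1)$; the smallest choice is $p=3$, $q=7$. The plan is to use the family $X_{p,q}^{(m)}$, $m\ge1$, from Theorem~\ref{thm:family}.

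For each $m$, Theorem~\ref{thm:family} gives three facts. First, $X_{p,q}^{(m)}$ is a finite simple skew brace. Second, its additive group is $E_{p,q}^{(m)}\times C_q$. Third, this additive group is non-abelian and nilpotent of class exactly two. So every member of the family has the required properties, and only pairwise non-isomorphism remains.

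For that, compare orders. By Theorem~\ref{thm:family}, $|X_{p,q}^{(m)}|=p^{2m(q-1)+1}q$. The exponent $2m(q-1)+1$ is strictly increasing in $m$, because $q-1\ge 2>0$. Hence distinct values of $m$ give distinct orders. A skew brace isomorphism is in particular a bijection of underlying sets, so braces of different orders cannot be isomorphic. Therefore $\{X_{p,q}^{(m)}:m\ge1\}$ is an infinite family of pairwise non-isomorphic skew braces with the stated properties.

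No step here is a genuine obstacle: all the substantive work is already in Theorem~\ref{thm:family}. The only point to make explicit is that an admissible pair $(p,q)$ exists at all. The example $(3,7)$ settles this. More generally, Dirichlet's theorem gives infinitely many primes $q\equiv1\pmod p$ for any odd prime $p$, but a single pair is enough for the corollary.
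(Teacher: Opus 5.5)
Your proposal is correct and matches the paper's proof: fix an admissible pair $(p,q)$, take the family $X_{p,q}^{(m)}$ from Theorem~\ref{thm:family}, and use the distinct orders $p^{2m(q-1)+1}q$ to get pairwise non-isomorphism. You also state explicitly that an admissible pair exists, for instance $(3,7)$. The paper's proof leaves this implicit and only supplies the pair in its closing example.
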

\begin{proof}
Fix odd primes $p$ and $q$ with $p\mid(q-1)$. For every $m\ge1$,
Theorem~\ref{thm:family} gives a finite simple skew brace
$X_{p,q}^{(m)}$ of order $p^{2m(q-1)+1}q.$
These orders are distinct as $m$ varies, so the skew braces
$X_{p,q}^{(m)}$ are pairwise non-isomorphic.
\end{proof}

\endgroup

\begin{example}
Take $p=3$ and $q=7$. Since $3\mid(7-1)$, Theorem~\ref{thm:family}
gives, for every $m\ge1$, a finite simple skew brace $X_{3,7}^{(m)}$
of order $|X_{3,7}^{(m)}|
=3^{12m+1}\cdot 7.$
Moreover, $(X_{3,7}^{(m)},+)
\cong E_{3,7}^{(m)}\times C_7,$
where $E_{3,7}^{(m)}$ is an extraspecial $3$-group of order $3^{12m+1}$
and exponent $3$. In particular, $(X_{3,7}^{(m)},+)$ is non-abelian
and nilpotent of class two.
For instance, when $m=1$ we obtain a finite simple skew brace of order $3^{13}\cdot7$
whose additive group is $E\times C_7,$
where $E$ is an extraspecial $3$-group of order $3^{13}$ and exponent $3$.
Thus the sequence
\[
X_{3,7}^{(1)},\ X_{3,7}^{(2)},\ X_{3,7}^{(3)},\ldots
\]
gives an explicit infinite family of pairwise non-isomorphic finite simple
skew braces of non-abelian nilpotent type.
\end{example}

\end{document}